\theoremstyle{thmstyleone}%
\newtheorem{theorem}{Theorem}
\theoremstyle{thmstyletwo}%
\newtheorem{remark}{Remark}
\newtheorem{corollary}{Corollary}[section]
\newtheorem{lemma}[theorem]{Lemma}
\theoremstyle{thmstylethree}%
\newtheorem{dfn}{Definition}
\begin{document}
	
	\title[Ramanujan Theta Function Identities and Quadratic Numbers ]{\textsc{Ramanujan Theta Function Identities and Quadratic Numbers}\begin{flushleft}
			\begin{center}
				
			\end{center}
	\end{flushleft}}

	\author*[1,2]{\fnm{Hemant} \sur{Masal}}\email{hemantmasal@gmail.com}
	
	\author[2]{\fnm{Hemant} \sur{Bhate}}\email{bhatehemant@gmail.com}

	\author[2]{\fnm{Subhash} \sur{Kendre}}\email{sdkendre@yahoo.com}

	\affil*[1]{\orgdiv{Department of First Year Engineering}, \orgname{Pune Institute of Computer Technology}, \orgaddress{\street{Dhankawadi}, \city{Pune}, \postcode{411 043}, \state{Maharashtra}, \country{India}}}
	
	\affil[2]{\orgdiv{Department of Mathematics}, \orgname{Savitribai Phule Pune University}, \orgaddress{\street{Ganeshkhind}, \city{Pune}, \postcode{411 007}, \state{Maharashtra}, \country{India}}}

	\abstract {Eigenvectors of the discrete Fourier transform can be expressed using Ramanujan theta functions. New theta function identities, Ramanujan theta function identities, and generating functions for the quadratic numbers are a consequence.}

	\keywords{Discrete Fourier transform, eigenvectors, Ramanujan theta function identities, Modular equations}
	
	\pacs[MSC Classification]{11F03, 11F27, 15A18, 33F05}
	
	\maketitle

\section{Introduction}

Let $A$ denote the matrix of the discrete Fourier transform. The $(j,k)^{th}$ entry of the DFT matrix $A$ of size $n$ is $A_{jk}=\frac{1}{\sqrt{n}}e^{\frac{2\pi i jk}{n}}, ~~j,k \in \mathbb{Z}_n$. It is clear that $A^4=I$, and hence the eigenvalues are $1, -1,i,-i$ with non negative multiplicities   $1, -1,i,-i$ are $[(n+4)/4], [(n+2)/4], [(n+1)/4], [(n-1)/4]$ respectively \cite{matveev2001intertwining,mehta1987eigenvalues}, where $[x]$ denotes greatest integer not greater than $x.$

Recall following Theorem from  Matveev \cite{matveev2001intertwining}:

\begin{theorem}\label{M1}
	Let $\sum_{m \in \mathbb{Z}}g_{m}$ be any absolutely convergent series. Then the vector whose $j^{th}$ component is 
	\begin{equation*}
		v_{j}(k)=\sum_{m \in \mathbb{Z}} (g_{mn+j}+(-1)^k g_{mn-j})+ \frac{1}{\sqrt{n}}  \sum_{m \in \mathbb{Z}} \left[(-i)^k g_{m} + (i)^{k}g_{-m} \right]e^{\frac{2\pi i mj}{n}}
	\end{equation*}
	is an eigenvector of DFT of order $n$ corresponding to eigenvalue $i^k$.
\end{theorem}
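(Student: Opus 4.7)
The plan is to exploit the fact that $A^4 = I$ and build the eigenvector with eigenvalue $i^k$ by applying the associated spectral projector
\begin{equation*}
P_{k} \;=\; \tfrac{1}{4}\sum_{r=0}^{3} i^{-kr} A^{r}
\end{equation*}
to a conveniently chosen ``seed'' vector. A natural choice is the periodization $w_{j} := \sum_{m \in \mathbb{Z}} g_{mn+j}$ of the absolutely convergent sequence $(g_{m})_{m\in\mathbb{Z}}$. I would verify first that $w \in \mathbb{C}^{n}$ is well defined by the absolute convergence hypothesis, and then show that $4P_{k}w$ is precisely the vector $v(k)$ in the statement, so that $v(k)$ automatically lies in the $i^{k}$-eigenspace of $A$.

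The computational core is to evaluate $A^{r}w$ for $r=1,2,3$. For $r=1$, I would unfold the inner $m$-sum, reindex $p=mn+j$ so that $p$ ranges over all of $\mathbb{Z}$, and obtain
\begin{equation*}
(Aw)_{j} \;=\; \tfrac{1}{\sqrt{n}}\sum_{p \in \mathbb{Z}} g_{p}\, e^{2\pi i pj/n},
\end{equation*}
which is a discrete Poisson-type identity; this is the step that truly uses absolute convergence, since the interchange of the finite $j$-sum with the infinite $m$-sum must be justified. For $r=2$, I would use that $A^{2}$ is the index-reversal involution $(A^{2}f)_{j} = f_{-j}$ (mod $n$) to get $(A^{2}w)_{j} = \sum_{m}g_{mn-j}$, and for $r=3$ I would apply $A$ once more (equivalently use $A^{3}=A^{-1}$) to obtain $(A^{3}w)_{j} = \tfrac{1}{\sqrt{n}}\sum_{p} g_{-p}\, e^{2\pi i pj/n}$.

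Combining the four pieces weighted by $i^{-kr}$ and using $i^{-2k}=(-1)^{k}$ and $i^{-k}=(-i)^{k}$, the shift-sum contributions from $r=0,2$ assemble into $\sum_{m}(g_{mn+j}+(-1)^{k}g_{mn-j})$, while the Fourier contributions from $r=1,3$ assemble into $\tfrac{1}{\sqrt{n}}\sum_{m}\bigl[(-i)^{k}g_{m}+i^{k}g_{-m}\bigr]e^{2\pi imj/n}$, matching $v_{j}(k)$ exactly. Since $P_{k}$ is a spectral projector for $A$, the identity $AP_{k}=i^{k}P_{k}$ immediately gives $Av(k) = i^{k}v(k)$.

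The main obstacle I anticipate is purely technical rather than conceptual: carefully justifying the swap of summations when computing $Aw$ and $A^{3}w$, and tracking the signs arising from the four powers $i^{-kr}$ so that the coefficients $(-1)^{k}$, $(-i)^{k}$, and $i^{k}$ appear in exactly the arrangement stated. No further structural idea beyond ``periodize, then project'' is required.
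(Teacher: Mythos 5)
Your proposal is correct, and it is worth noting at the outset that the paper itself offers no proof of this statement: it is quoted verbatim from Matveev's article and used as a black box. Your ``periodize, then project'' argument is therefore a genuinely self-contained verification rather than a reproduction of anything in the text. The computation checks out: with $w_j=\sum_{m}g_{mn+j}$ one has $(Aw)_j=\tfrac{1}{\sqrt n}\sum_{p}g_p e^{2\pi i pj/n}$ by the reindexing $p=mn+l$ (legitimate by absolute convergence), $(A^2w)_j=w_{-j}=\sum_m g_{mn-j}$ since $A^2$ is index reversal, and $(A^3w)_j=\tfrac{1}{\sqrt n}\sum_p g_{-p}e^{2\pi i pj/n}$; the weights $i^{-kr}$ with $i^{-k}=(-i)^k$, $i^{-2k}=(-1)^k$, $i^{-3k}=i^k$ then reassemble exactly into $v_j(k)$, and $AP_k=i^kP_k$ (a one-line consequence of $A^4=I$) gives the eigenvalue equation. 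The only caveat, which the theorem statement itself also glosses over, is that $4P_kw$ may vanish for particular sequences $(g_m)$ (for instance whenever the multiplicity count $[(n-1)/4]$ etc.\ is zero, or for special symmetric choices of $g$), so strictly speaking the conclusion should read ``lies in the $i^k$-eigenspace'' rather than ``is an eigenvector.'' This does not affect how the theorem is used later in the paper, where the relevant components are manifestly nonzero theta series.
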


\section{The Ramanujan theta function}
\begin{dfn}
	The Ramanujan theta function is defined by \cite{berndt2012ramanujan,Bruce,Robert}, 
	
	\begin{equation}
		f(a,b)=\sum_{m \in \mathbb{Z}}a^{\frac{m(m+1)}{2}}b^{\frac{m(m-1)}{2}},~~~~~~ \mid ab \mid<1.
	\end{equation}
\end{dfn}
The Ramanujan theta function satisfy \cite{berndt2012ramanujan}

\begin{equation*}
	f(a,b)=f(b,a),f(1,a)=2f(a,a^3),f(-1,a)=0.
\end{equation*}

Also, for fixed integer $n,$

\begin{equation*}
	f(a,b)=a^{\frac{n(n+1)}{2}}b^{\frac{n(n-1)}{2}} f(a(ab)^n, b(ab)^{-n}).
\end{equation*}

Recall that the notation for the infinite product is, 
\begin{equation*}
	(\alpha:\beta)_{\infty}=\prod_{k=0}^{\infty}(1-\alpha \beta ^k),~~\text{and} ~(\alpha, \gamma, \cdots, \zeta:\beta)_{\infty}=
	(\alpha:\beta)_{\infty}(\gamma:\beta)_{\infty} \cdots (\zeta:\beta)_{\infty} 
\end{equation*}

The well known Jacobi triple product identity \cite{berndt2012ramanujan,murty2015problems} gives us following 
\begin{equation}\label{j1}
	\begin{split}
		f(a,b) = (-a:ab)_{\infty} (-b:ab)_{\infty}(ab:ab)_{\infty}
		= (-a, -b,-ab : ab)_{\infty}.
	\end{split}
\end{equation}

We now introduce the Ramanujan theta function with characteristics $(\alpha, \beta , c)$ as 

\begin{equation}
	f_{(\alpha, \beta , c)}(a,b,x)=\sum_{m \in \mathbb{Z}}a^{\frac{(m+\alpha)(m+\alpha+1/c)}{2}}b^{\frac{(m+\alpha)(m+\alpha-1/c)}{2 }}e^{2\pi i (m+\alpha)(x+\beta)}.
\end{equation}
For $\alpha =0, \beta =0, x=0, c=1$ we will get Ramanujan theta function. 

\section{ The Eigenvectors of DFT} 
The eigenvalue and eigenvectors decomposition of DFT is studied by McClellan and Parks \cite{mcclellan1972eigenvalue}. Mehta \cite{mehta1987eigenvalues} studied eigenvctors of DFT using Hermite functions whereas Matveev \cite{matveev2001intertwining} proved that Jacobi theta functions also gives eigenvectors of the DFT. Following Theorem gives the eigenvectors of the DFT in the form of Ramanujan theta function.
\begin{theorem}\label{RT1}
	For $n$ fixed,  $a,b \in \mathbb{C}$ with
	$\mid ab\mid <1$ and for any $x \in \mathbb{C}$, the $j^{th},$ component with $j\in \{ 0,1,2, \cdots, n-1\},$ of the eigenvector corresponding to eigenvalue $i^k$ of the DFT is 
	
	\begin{equation*}
		\begin{split}
			v_j(k)=&f_{(\frac{j}{n},0,n)} (a,b,x)+ (-1)^k f_{(\frac{-j}{n},0,n)} (a,b,x) \\
			+&\frac{1}{\sqrt{n}} \left[ (-i)^k f_{(0,\frac{j}{n},1)} (a^{1/n^2},b^{1/n^2},x/n)+ (-i)^{3k} f_{(0,\frac{-j}{n},1)} (a^{1/n^2},b^{1/n^2},x/n)\right].    
		\end{split}
	\end{equation*}
\end{theorem}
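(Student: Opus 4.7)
The plan is to obtain Theorem \ref{RT1} as a direct specialization of Matveev's Theorem \ref{M1}, where the key step is to choose the sequence $\{g_m\}$ so that the two sums appearing in Matveev's formula are recognized as Ramanujan theta functions with characteristics. Concretely, I would set
\[
g_m = a^{\frac{m(m+1)}{2n^2}}\, b^{\frac{m(m-1)}{2n^2}}\, e^{\frac{2\pi i m x}{n}}, \qquad m \in \mathbb{Z},
\]
and first verify that $\sum_{m\in\mathbb{Z}} g_m$ converges absolutely under the hypothesis $|ab|<1$; this is immediate because the dominant factor in $|g_m|$ is $|ab|^{m^2/(2n^2)}$, which decays rapidly.

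Next, I would rewrite each of the four sums in Matveev's formula. For the first, the substitution $\ell = mn+j$ identifies
\[
\sum_{m\in\mathbb{Z}} g_{mn+j}
 = \sum_{\ell\equiv j \pmod n} a^{\frac{\ell(\ell+1)}{2n^2}} b^{\frac{\ell(\ell-1)}{2n^2}} e^{\frac{2\pi i \ell x}{n}}
 = f_{(j/n,\,0,\,n)}(a,b,x),
\]
since setting $\ell = mn+j$ makes $\ell/n = m + j/n$ play the role of $m+\alpha$ in the definition of $f_{(\alpha,\beta,c)}$ with $c=n$. The analogous substitution $\ell = mn-j$ gives $\sum_m g_{mn-j} = f_{(-j/n,\,0,\,n)}(a,b,x)$.

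For the second pair of sums, I would simply expand:
\[
\sum_{m\in\mathbb{Z}} g_m\, e^{\frac{2\pi i m j}{n}}
 = \sum_{m\in\mathbb{Z}} (a^{1/n^2})^{\frac{m(m+1)}{2}} (b^{1/n^2})^{\frac{m(m-1)}{2}} e^{2\pi i m\bigl(\frac{x}{n}+\frac{j}{n}\bigr)}
 = f_{(0,\,j/n,\,1)}(a^{1/n^2},b^{1/n^2},x/n),
\]
and then use the reindexing $m \mapsto -m$ in $\sum_m g_{-m} e^{2\pi i mj/n}$ together with the $a \leftrightarrow b$ symmetry built into the quadratic exponents to identify it with $f_{(0,\,-j/n,\,1)}(a^{1/n^2},b^{1/n^2},x/n)$. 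Finally, I would observe that $i^k = (-i)^{3k}$, so the coefficient $i^k$ appearing in Theorem \ref{M1} agrees with the $(-i)^{3k}$ written in Theorem \ref{RT1}. Substituting these four identifications into Matveev's formula yields precisely the claimed expression.

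The only genuine obstacle is the bookkeeping in the substitution $\ell = mn \pm j$: one must confirm that as $m$ ranges over $\mathbb{Z}$, $\ell$ ranges over the full residue class $j \pmod n$ (respectively $-j \pmod n$), and that the quadratic exponent $\tfrac{\ell(\ell\pm 1)}{2n^2}$ matches $\tfrac{(m\pm j/n)(m\pm j/n \pm 1/n)}{2}$ required by the definition of $f_{(\pm j/n,0,n)}$. Once these index substitutions are checked, the theorem follows with no further computation.
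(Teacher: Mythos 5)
Your proposal is correct and follows exactly the route the paper takes: the paper's own justification (given as a remark) is precisely to set $g_m=a^{\frac{m(m+1)}{2n^2}}b^{\frac{m(m-1)}{2n^2}}e^{2\pi i mx/n}$ and apply Theorem \ref{M1}. Your write-up simply supplies the index substitutions $\ell=mn\pm j$ and the identification $i^k=(-i)^{3k}$ that the paper leaves implicit.
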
	
\begin{remark}
	This follows by taking
	\begin{equation}
		g_m=a^{\frac{m(m+1)}{2n^2}}b^{\frac{m(m-1)}{2n^2}} e^{2\pi i \frac{m}{n}x}
	\end{equation}
	and applying Theorem \ref{M1}. 
\end{remark}
\section{Functional Identities}	
In this section we established some functional relations between Ramanujan theta functions.
\begin{lemma}\label{T1}
	The following functional equations holds for the DFT of size $2$
	\begin{enumerate}
		\item 
		$
			2 f_{(0,0,2)}(a,b,x)+f_{(\frac{1}{2},0,2)}(a,b,x)+f_{(\frac{-1}{2},0,2)}(a,b,x)= 2 f_{(0,0,1)} (a^{1/4}, b^{1/4},x/2)
	$
		\item
		$
			2 f_{(0,0,2)}(a,b,x)-[f_{(\frac{1}{2},0,2)}(a,b,x)+f_{(\frac{-1}{2},0,2)}(a,b,x)]
			= f_{(0,\frac{1}{2},1)}(a^{1/4}, b^{1/4},x/2)+ f_{(0,\frac{-1}{2},1)}(a^{1/4}, b^{1/4},x/2).
			$ 
	\end{enumerate}
\end{lemma}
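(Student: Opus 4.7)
The approach I will take is direct series dissection by parity of the summation index, following the standard pattern of Ramanujan theta dissection identities; no modular transformation or Jacobi triple product is needed.

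First I will unfold the definition of $f_{(0,0,1)}(a^{1/4}, b^{1/4}, x/2)$ to
\[
\sum_{m \in \mathbb{Z}} a^{m(m+1)/8}\, b^{m(m-1)/8}\, e^{\pi i m x},
\]
and split this series into its even- and odd-index parts. Substituting $m = 2m'$ in the even part and collecting the exponents (denominators of $8$ cancel to give denominators of $4$) produces exactly the summand of $f_{(0,0,2)}(a,b,x)$. Substituting $m = 2m'+1$ in the odd part gives an overall factor $e^{\pi i x}$ times a series whose summand, after simplification, coincides with the summand of $f_{(1/2,0,2)}(a,b,x)$. Thus I expect to obtain the intermediate identity
\[
f_{(0,0,1)}(a^{1/4}, b^{1/4}, x/2) = f_{(0,0,2)}(a,b,x) + f_{(1/2,0,2)}(a,b,x).
\]

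Next I will establish the symmetry $f_{(1/2,0,2)}(a,b,x) = f_{(-1/2,0,2)}(a,b,x)$ by the reindexing $m \mapsto m-1$ inside the series defining $f_{(1/2,0,2)}$: the exponent $(m+1/2)(m+1)/2$ becomes $(m'-1/2)m'/2$, exactly that of the summand of $f_{(-1/2,0,2)}$, and the phase $e^{2\pi i(m+1/2)x}$ becomes $e^{2\pi i(m'-1/2)x}$. Multiplying the intermediate identity by $2$ and using this symmetry to rewrite $2 f_{(1/2,0,2)}$ as $f_{(1/2,0,2)} + f_{(-1/2,0,2)}$ produces identity (1).

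For identity (2) the key observation is that the twist $\beta = \pm 1/2$ in $f_{(0,\pm 1/2, 1)}(a^{1/4}, b^{1/4}, x/2)$ inserts the factor $e^{2\pi i m(\pm 1/2)} = (-1)^m$ into the series; since $(-1)^m$ is the same for both signs, the two functions coincide. The same parity dissection as above now shows that the even-index part still contributes $f_{(0,0,2)}(a,b,x)$ while the odd-index part picks up a sign and contributes $-f_{(1/2,0,2)}(a,b,x)$. Adding the two choices of sign and reusing the symmetry $f_{(1/2,0,2)} = f_{(-1/2,0,2)}$ to rewrite $-2 f_{(1/2,0,2)}$ as $-f_{(1/2,0,2)} - f_{(-1/2,0,2)}$ yields identity (2).

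The main obstacle is purely computational: one must carefully match exponents of the form $(m+1/2)(m+1)/2$ and $(2m'+1)(2m'+2)/8$ after the substitutions $m = 2m'$ or $m = 2m'+1$ once the bases have been rescaled from $a, b$ to $a^{1/4}, b^{1/4}$. The bookkeeping is the only place where a slip is likely, but conceptually the lemma is simply the parity dissection of the Ramanujan theta series underlying $f_{(0,0,1)}(a^{1/4},b^{1/4},x/2)$ and its twisted variants $f_{(0,\pm 1/2, 1)}$.
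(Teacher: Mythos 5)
Your proof is correct, but it follows a genuinely different route from the paper. The paper never dissects the series directly: it invokes Theorem \ref{RT1} to write down the eigenvectors $v_1,v_2$ of the size-$2$ DFT for the eigenvalues $1$ and $-1$, forms $v_1+v_2$ and $v_1-v_2$, and reads both identities off the components of the single matrix relation $A(v_1+v_2)=v_1-v_2$. You instead prove the lemma by parity dissection of $f_{(0,0,1)}(a^{1/4},b^{1/4},x/2)$ and its twist $f_{(0,\pm1/2,1)}(a^{1/4},b^{1/4},x/2)$, which in fact establishes the sharper one-term identities
\[
f_{(0,0,1)}(a^{1/4},b^{1/4},x/2)=f_{(0,0,2)}(a,b,x)+f_{(\frac12,0,2)}(a,b,x),\qquad
f_{(0,\frac12,1)}(a^{1/4},b^{1/4},x/2)=f_{(0,0,2)}(a,b,x)-f_{(\frac12,0,2)}(a,b,x),
\]
together with the symmetries $f_{(\frac12,0,2)}=f_{(-\frac12,0,2)}$ and $f_{(0,\frac12,1)}=f_{(0,-\frac12,1)}$, from which the stated equations follow by doubling. (All of your exponent checks go through: with $m=2m'$ the exponent $\tfrac{m(m+1)}{8}$ on $a^{1/4}$ becomes $\tfrac{m'(2m'+1)}{4}=\tfrac{m'(m'+1/2)}{2}\cdot\tfrac{1}{1}$ matching $f_{(0,0,2)}$, and with $m=2m'+1$ it matches $f_{(\frac12,0,2)}$; the shift $m\mapsto m-1$ does carry $f_{(\frac12,0,2)}$ to $f_{(-\frac12,0,2)}$.) The trade-off: your argument is elementary and self-contained, needing neither Matveev's theorem nor the DFT at all, and it explains \emph{why} the identities hold (they are the $2$-dissection of a theta series); the paper's spectral argument is what generalizes mechanically to DFTs of larger size, as in Lemma \ref{r5}, where a direct dissection would be harder to organize.
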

\begin{proof}
	The DFT $A$ of size $2$ has two eigenvalues $1, -1$. Both the eigenvalues are non degenerate. Let $v_1, v_2$ be the eigenvectors corresponding to the  eigenvalues $1, -1$ respectively. By using Theorem \ref{RT1}, we have 
	
	\begin{equation*}
		v_1=\begin{pmatrix}
			v_{0}(0)\\
			v_{1}(0)
		\end{pmatrix}~~ \text{and}~~
		v_2=\begin{pmatrix}
			v_{0}(2)\\
			v_{1}(2)
		\end{pmatrix},
	\end{equation*}
	where, 
	
	\begin{equation*}
		\begin{split}
			v_{0}(0)=&2 f_{(0,0,2)}(a,b,x)+\sqrt{2}f_{(0,0,1)} (a^{1/4}, b^{1/4},x/2)\\
			v_{1}(0)=& f_{(\frac{1}{2},0,2)}(a,b,x)+f_{(\frac{-1}{2},0,2)}(a,b,x)\\
			&+\frac{1}{\sqrt{2}}\left[f_{(0,\frac{1}{2},1)}(a^{1/4}, b^{1/4},x/2)+ f_{(0,\frac{-1}{2},1)}(a^{1/4}, b^{1/4},x/2)\right],\\
			v_{0}(2)=&2 f_{(0,0,2)}(a,b,x)-\sqrt{2}f_{(0,0,1)} (a^{1/4}, b^{1/4},x/2),\\
			v_{1}(2)=& f_{(\frac{1}{2},0,2)}(a,b,x)+f_{(\frac{-1}{2},0,2)}(a,b,x)\\
			&-\frac{1}{\sqrt{2}}\left[f_{(0,\frac{1}{2},1)}(a^{1/4}, b^{1/4},x/2)+ f_{(0,\frac{-1}{2},1)}(a^{1/4}, b^{1/4},x/2)\right].\\
		\end{split}
	\end{equation*}
	So,
	\begin{align}\label{r1}
		v_1+v_2=\begin{pmatrix}
			4 f_{(0,0,2)}(a,b,x)\\
			2\left[f_{(\frac{1}{2},0,2)}(a,b,x)+f_{(\frac{-1}{2},0,2)}(a,b,x)\right]
		\end{pmatrix}.
	\end{align}
	and 
	\begin{align}\label{r2}
		v_1-v_2=\begin{pmatrix}
			2\sqrt{2}f_{(0,0,1)} (a^{1/4}, b^{1/4},x/2)\\
			\sqrt{2}\left[f_{(0,\frac{1}{2},1)}(a^{1/4}, b^{1/4},x/2)+ f_{(0,\frac{-1}{2},1)}(a^{1/4}, b^{1/4},x/2)\right]
		\end{pmatrix}.
	\end{align}
	It is clear that
	\begin{equation}\label{r3}
		A(v_1+v_2)=v_1-v_2.
	\end{equation}
	
	Equations (\ref{r1}),(\ref{r2}) and (\ref{r3}) give the required results.
\end{proof}

The immediate consequence of the lemma above is the next corollary.
\begin{corollary} \label{r4} The following equations hold.
	\begin{enumerate}
		\item 
		$4 f_{(0,0,2)}(a,b,x)
		= 2 f_{(0,0,1)} (a^{1/4}, b^{1/4},x/2)
		+f_{(0,\frac{1}{2},1)}(a^{1/4}, b^{1/4},x/2)+ f_{(0,\frac{-1}{2},1)}(a^{1/4}, b^{1/4},x/2),$
		\item
		$2\left[f_{(\frac{1}{2},0,2)}(a,b,x)+f_{(\frac{-1}{2},0,2)}(a,b,x)\right]
		=2 f_{(0,0,1)} (a^{1/4}, b^{1/4},x/2)
		-[f_{(0,\frac{1}{2},1)}(a^{1/4}, b^{1/4},x/2)+ f_{(0,\frac{-1}{2},1)}(a^{1/4}, b^{1/4},x/2)].$
		\item The quadratic identity:  This identity can be considers as product identity for Ramanujan theta functions \cite{berndt2012ramanujan}. 
		\begin{equation*}
			\begin{split}
				&4 f^2_{(0,0,2)}(a,b,x)-\left[f_{(\frac{1}{2},0,2)}(a,b,x)+f_{(\frac{-1}{2},0,2)}(a,b,x) \right]^2\\
				&=2 f_{(0,0,1)} (a^{1/4}, b^{1/4},x/2)\left[ f_{(0,\frac{1}{2},1)}(a^{1/4}, b^{1/4},x/2)+ f_{(0,\frac{-1}{2},1)}(a^{1/4}, b^{1/4},x/2)\right].
			\end{split}
		\end{equation*}
	\end{enumerate} 	
\end{corollary}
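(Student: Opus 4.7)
My plan is to obtain all three identities as immediate algebraic consequences of Lemma \ref{T1}, without returning to the DFT. To keep the bookkeeping transparent I would introduce the shorthand
\[
X := 2 f_{(0,0,2)}(a,b,x), \quad Y := f_{(\frac{1}{2},0,2)}(a,b,x)+f_{(\frac{-1}{2},0,2)}(a,b,x),
\]
\[
P := 2 f_{(0,0,1)}(a^{1/4},b^{1/4},x/2), \quad Q := f_{(0,\frac{1}{2},1)}(a^{1/4},b^{1/4},x/2)+f_{(0,\frac{-1}{2},1)}(a^{1/4},b^{1/4},x/2).
\]
In this notation Lemma \ref{T1}(1) reads $X+Y = P$ and Lemma \ref{T1}(2) reads $X-Y = Q$.

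For part (1) of the corollary I would simply add the two identities to get $2X = P + Q$, which upon unfolding $X$, $P$, $Q$ is exactly the claimed equation $4 f_{(0,0,2)} = 2 f_{(0,0,1)} + f_{(0,\frac{1}{2},1)} + f_{(0,\frac{-1}{2},1)}$. For part (2) I would subtract the two identities, obtaining $2Y = P - Q$, which is the stated formula for $2[f_{(\frac{1}{2},0,2)}+f_{(\frac{-1}{2},0,2)}]$.

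For part (3), the ``quadratic identity'', the key observation is that $(X+Y)(X-Y) = X^2 - Y^2$, so multiplying the two identities in Lemma \ref{T1} gives $X^2 - Y^2 = PQ$. Rewriting $X^2 - Y^2$ as $4 f^2_{(0,0,2)}(a,b,x) - [f_{(\frac{1}{2},0,2)}(a,b,x)+f_{(\frac{-1}{2},0,2)}(a,b,x)]^2$ and $PQ$ as $2 f_{(0,0,1)}(a^{1/4},b^{1/4},x/2)\bigl[f_{(0,\frac{1}{2},1)}(a^{1/4},b^{1/4},x/2)+f_{(0,\frac{-1}{2},1)}(a^{1/4},b^{1/4},x/2)\bigr]$ yields the stated identity.

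There is essentially no obstacle to overcome: the three statements are, respectively, the sum, the difference, and the product of the two equations supplied by Lemma \ref{T1}. The only thing worth being careful about is the matching of arguments — the left-hand factors are evaluated at $(a,b,x)$ while the right-hand factors are evaluated at $(a^{1/4},b^{1/4},x/2)$ — so the abbreviations $X,Y,P,Q$ above are there purely to make sure this is transcribed correctly.
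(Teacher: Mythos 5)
Your proposal is correct and follows exactly the route the paper intends: the paper offers no explicit proof, merely calling the corollary an ``immediate consequence'' of Lemma \ref{T1}, and your sum, difference, and product of the two identities $X+Y=P$ and $X-Y=Q$ is precisely that consequence. The bookkeeping of the arguments $(a,b,x)$ versus $(a^{1/4},b^{1/4},x/2)$ is handled correctly.
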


We now derive the identity corresponding to the DFT of size $3.$

\begin{lemma}\label{r5}
	\begin{equation}
		\begin{split}
			& \frac{2\sqrt{3}}{1+\sqrt{3}}\sum_{m=0}^{\infty} a^{\frac{3m(3m+1)}{2}}b^{\frac{3m(3m-1)}{2}}e^{2\pi imx}\\
			-\sqrt{3} &\left[\sum_{m=0}^{\infty} a^{\frac{(3m+1)(3m+2)}{2}}b^{\frac{3m(3m+1)}{2}}e^{2\pi i(m+1/3)x}
			+ \sum_{m=0}^{\infty} a^{\frac{3m(3m-1)}{2}}b^{\frac{(3m-1)(3m-2)}{2}}e^{2\pi i(m-1/3)x}\right]\\
			= &\left(ab,-ae^{\frac{2\pi i(x+1)}{3}}, -be^{\frac{-2\pi i(x+1)}{3}} : ab\right)_{\infty} + \left(ab,-ae^{\frac{2\pi i(x-1)}{3}}, -be^{\frac{-2\pi i(x-1)}{3}} : ab\right)_{\infty} \\
			&- \frac{-2}{1+\sqrt{3}} \left(ab,-ae^{2\pi ix}, -be^{-2\pi ix} : ab\right)_{\infty}
		\end{split}
	\end{equation}
\end{lemma}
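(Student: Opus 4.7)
The plan is to mirror the proof of Lemma~\ref{T1}, now for the DFT of size $n=3$. The matrix $A$ of the DFT of size~$3$ has the three eigenvalues $1,-1,i$, each with multiplicity one, and Theorem~\ref{RT1} produces the corresponding eigenvectors $v(0)$, $v(2)$, $v(1)$ in closed form via the Ramanujan theta function with characteristics.

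First I would record a symmetry: whenever $k$ is even one has $v_j(k)=v_{n-j}(k)$, because the theta-type sum $\sum_m(g_{mn+j}+(-1)^k g_{mn-j})$ in Theorem~\ref{M1} is invariant under $j\mapsto n-j$ when $(-1)^k=1$, and the Fourier-type sum is invariant under $m\mapsto -m$ when $(-i)^k=i^k$. For $n=3$ this forces $v_1(0)=v_2(0)$ and $v_1(2)=v_2(2)$, so that only $v_0(k)$ and $v_1(k)$ are independent.

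Next I would use the $j=0$ component of the eigenvalue equation $Av(k)=i^k v(k)$, namely $\tfrac{1}{\sqrt 3}(v_0(k)+v_1(k)+v_2(k))=i^k v_0(k)$, which under the above symmetry reduces for even $k$ to $2v_1(k)=(\sqrt 3\, i^k-1)v_0(k)$. Specialising to $k=0$ gives the single scalar relation $2v_1(0)=(\sqrt 3-1)v_0(0)$. Substituting the explicit formulas from Theorem~\ref{RT1}, multiplying through by $\sqrt 3$, and rearranging so that the $f_{(\cdot,0,3)}$ contributions sit on one side and the $f_{(0,\cdot,1)}$ contributions on the other yields
\[
(3-\sqrt 3)f_{(0,0,3)}(a,b,x)-\sqrt 3\bigl[f_{(1/3,0,3)}(a,b,x)+f_{(-1/3,0,3)}(a,b,x)\bigr]=\bigl[f_{(0,1/3,1)}+f_{(0,-1/3,1)}\bigr](a^{1/9},b^{1/9},x/3)-(\sqrt 3-1)\,f_{(0,0,1)}(a^{1/9},b^{1/9},x/3).
\]
Rationalising the scalars gives $3-\sqrt 3=\frac{2\sqrt 3}{1+\sqrt 3}$ and $-(\sqrt 3-1)=\frac{-2}{1+\sqrt 3}$, precisely the constants displayed in the lemma.

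The remaining work is a change of variables together with an application of the Jacobi triple product. Substituting $a\mapsto a^9$ and $b\mapsto b^9$ converts each fractional-exponent $\mathbb{Z}$-sum $f_{(j/3,0,3)}(a,b,x)$ into the integer-exponent sums $\sum_m a^{3m(3m\pm 1)/2}b^{\cdots}e^{2\pi i(m\pm 1/3)x}$ that appear on the left of the statement, while on the right each $f_{(0,\beta,1)}(a,b,x/3)=f\bigl(ae^{2\pi i(x/3+\beta)},be^{-2\pi i(x/3+\beta)}\bigr)$ becomes, via \eqref{j1}, an infinite product of Jacobi triple product shape $(-ab,-ae^{2\pi i(x/3+\beta)},-be^{-2\pi i(x/3+\beta)}:ab)_{\infty}$, one for each $\beta\in\{1/3,-1/3,0\}$. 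The main obstacle is not conceptual but purely arithmetical: keeping track of the $\sqrt 3$ normalisations inherited from Theorem~\ref{RT1}, the argument shifts $x\mapsto x/3$ and $x\mapsto x\pm 1/3$ produced by the characteristics, and the signs that appear when passing from the Ramanujan theta function to its Jacobi triple product form, so that the factor $\frac{2\sqrt 3}{1+\sqrt 3}$ and the shifted exponentials $e^{\pm 2\pi i(x\pm 1)/3}$ inside the Jacobi products come out in the precise form stated.
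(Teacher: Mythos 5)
Your proposal is correct and reaches the paper's key intermediate identity, namely $\frac{2\sqrt 3}{1+\sqrt 3}f_{(0,0,3)}(a,b,x)-\sqrt 3\bigl[f_{(1/3,0,3)}(a,b,x)+f_{(-1/3,0,3)}(a,b,x)\bigr]=f_{(0,1/3,1)}+f_{(0,-1/3,1)}-\frac{2}{1+\sqrt 3}f_{(0,0,1)}$ evaluated at $(a^{1/9},b^{1/9},x/3)$, which is exactly equation (\ref{R2:5}); your constants $3-\sqrt 3$ and $\sqrt 3-1$ are the rationalised forms of $\frac{2\sqrt 3}{1+\sqrt 3}$ and $\frac{2}{1+\sqrt 3}$. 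The difference lies in how that scalar relation is obtained. The paper exhibits the explicit eigenvector $(1+\sqrt 3,1,1)^{T}$ of the $3\times 3$ DFT for the non-degenerate eigenvalue $1$, invokes proportionality of eigenvectors, and sets the $2\times 2$ minor $\det\begin{pmatrix}1+\sqrt 3 & v_0(0)\\ 1 & v_1(0)\end{pmatrix}$ to zero, giving $v_0(0)=(1+\sqrt 3)v_1(0)$. You instead prove the symmetry $v_1(0)=v_2(0)$ directly from the structure of Matveev's formula and then read off the first row of $Av(0)=v(0)$, obtaining $2v_1(0)=(\sqrt 3-1)v_0(0)$, which is the same relation since $2/(\sqrt 3-1)=1+\sqrt 3$. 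Your route is somewhat more self-contained: it does not require knowing the eigenvector $(1+\sqrt 3,1,1)^{T}$ in advance, nor does it formally invoke non-degeneracy (only the eigenvector property from Theorem \ref{RT1} and the symmetry). The final unwinding is also handled more carefully than in the paper: your substitution $a\mapsto a^{9}$, $b\mapsto b^{9}$ is what actually converts the fractional exponents of (\ref{3:1}) into the integer exponents $a^{3m(3m\pm 1)/2}$ appearing in the lemma as stated, a step the paper leaves implicit. One small slip: the Jacobi triple product (\ref{j1}) gives $f(A,B)=(-A,-B,AB:AB)_{\infty}$, so the product attached to each $f_{(0,\beta,1)}$ should carry the factor $(ab:ab)_{\infty}$, not $(-ab:ab)_{\infty}$ as you wrote; with that sign corrected your products agree with the right-hand side of the lemma.
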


\begin{proof}
	The DFT of size $n=3$ has eigenvalues $1,-1,i$ and all non-degenerate. The eigenvectors corresponding to eigenvalue $1$ are,
	\begin{equation*}
		v_1= \begin{bmatrix}
			1+\sqrt{3}\\
			1\\
			1
		\end{bmatrix}, 
	\end{equation*}
	Let,
	\begin{equation*}
		\begin{split}
			v_{0}(0)=&2f_{(0,0,3)}(a,b,x) + \frac{2}{\sqrt{3}}f_{(0,0,1))}(a^{1/9},b^{1/9},x/3),\\
			v_{1}(0)=&f_{(\frac{1}{3},0,3)}(a,b,x) + f_{(\frac{-1}{3},0,3)}(a,b,x)\\
			&+ \frac{1}{\sqrt{3}} \left[ f_{(0,\frac{1}{3},1)}(a^{1/9},b^{1/9},x/3) + f_{(0,\frac{-1}{3},1)}(a^{1/9},b^{1/9},x/3)\right],\\
			v_{2}(0)=&	f_{(\frac{2}{3},0,3)}(a,b,x) + f_{(\frac{-2}{3},0,3)}(a,b,x)\\
			&+ \frac{1}{\sqrt{3}} \left[ f_{(0,\frac{2}{3},1)}(a^{1/9},b^{1/9},x/3) + f_{(0,\frac{-2}{3},1)}(a^{1/9},b^{1/9},x/3)\right]
		\end{split}
	\end{equation*}
	\begin{equation*}
		v_2= \begin{bmatrix}
			v_{0}(0)\\
			v_{1}(0)\\
			v_{2}(0)
		\end{bmatrix}.
	\end{equation*}
	It is clear that any $2\times 2$ minor of $[v_1, v_2]$ vanishes. So,
	\begin{equation*}
		\det \begin{bmatrix}
			1+\sqrt{3} &  v_{0}(0)\\
			1 & v_{1}(0)
		\end{bmatrix}=0
	\end{equation*}
	\begin{equation}
		\begin{split}
			&2f_{(0,0,3)}(a,b,x) + \frac{2}{\sqrt{3}}f_{(0,0,1))}(a^{1/9},b^{1/9},x/3) \\
			=& (1+\sqrt{3}) \left[f_{(\frac{1}{3},0,3)}(a,b,x) + f_{(\frac{-1}{3},0,3)}(a,b,x)\right]\\
			&+ \frac{1+\sqrt{3}}{\sqrt{3}} \left[ f_{(0,\frac{1}{3},1)}(a^{1/9},b^{1/9},x/3) + f_{(0,\frac{-1}{3},1)}(a^{1/9},b^{1/9},x/3)\right]
		\end{split}   
	\end{equation}
	i.e.
	
	\begin{equation}\label{R2:5}
		\begin{split}
			&\frac{2\sqrt{3}}{1+\sqrt
				{3}} f_{(0,0,3)}(a,b,x) -\sqrt{3} \left[ f_{(\frac{1}{3},0,3)}(a,b,x) + f_{(\frac{-1}{3},0,3)}(a,b,x)\right]\\
			=&  f_{(0,\frac{1}{3},1)}(a^{1/9},b^{1/9},x/3) 
			+f_{(0,\frac{-1}{3},1)}(a^{1/9},b^{1/9},x/3)
			-\frac{2}{1+\sqrt{3}}f_{(0,0,1))}(a^{1/9},b^{1/9},x/3) 
		\end{split}
	\end{equation}
	
	This leads to,
	
	\begin{equation}\label{3:1}
		\begin{split}
			& \frac{2\sqrt{3}}{1+\sqrt{3}}\sum_{m=0}^{\infty} a^{\frac{m(m+1/3)}{2}}b^{\frac{m(m-1/3)}{2}}e^{2\pi imx}\\ &-\sqrt{3} \sum_{m=0}^{\infty} a^{\frac{(m+1/3)(m+2/3)}{2}}b^{\frac{m(m+1/3)}{2}}e^{2\pi i(m+1/3)x}\\
			&-\sqrt{3}
			\sum_{m=0}^{\infty} a^{\frac{m(m-1/3)}{2}}b^{\frac{(m-1/3)(m-2/3)}{2}}e^{2\pi i(m-1/3)x}\\
			= & \sum_{m=0}^{\infty} a^{\frac{m(m+1)}{18}}b^{\frac{m(m-1)}{18}}e^{2\pi im(x+1)/3}  + \sum_{m=0}^{\infty} a^{\frac{m(m+1)}{18}}b^{\frac{m(m-1)}{18}}e^{2\pi im(x-1)/3} \\
			&- \frac{2}{1+\sqrt{3}} \sum_{m=0}^{\infty} a^{\frac{m(m+1)}{18}}b^{\frac{m(m-1)}{18}}e^{2\pi imx}
		\end{split}
	\end{equation}
	
	The right hand side of the equation (\ref{3:1}) can be expressed as follows, 
	
	\begin{equation*}
		\begin{split}
			f(a^{(1/9)}e^{2\pi ix}, b^{(1/9)}e^{-2\pi ix})=&\sum_{m=0}^{\infty} (a^{(1/9)}e^{2\pi ix})^{m(m+1)/2}(b^{(1/9)}e^{-2\pi ix})^{m(m-1)/2}\\
			&= \sum_{m=0}^{\infty} a^{\frac{m(m+1)}{18}}b^{\frac{m(m-1)}{18}}e^{2\pi imx}.
		\end{split}
	\end{equation*}
	The equation (\ref{3:1}) becomes, 
	
	\begin{equation}\label{3:2}
		\begin{split}
			& \frac{2\sqrt{3}}{1+\sqrt{3}}\sum_{m=0}^{\infty} a^{\frac{m(m+1/3)}{2}}b^{\frac{m(m-1/3)}{2}}e^{2\pi imx}\\ &-\sqrt{3} \sum_{m=0}^{\infty} a^{\frac{(m+1/3)(m+2/3)}{2}}b^{\frac{m(m+1/3)}{2}}e^{2\pi i(m+1/3)x}\\
			&-\sqrt{3} \sum_{m=0}^{\infty} a^{\frac{m(m-1/3)}{2}}b^{\frac{(m-1/3)(m-2/3)}{2}}e^{2\pi i(m-1/3)x}\\
			= & f\left(a^{(1/9)}e^{2\pi i(x+1)/3}, b^{(1/9)}e^{-2\pi i(x+1)/3}\right) + f\left(a^{(1/9)}e^{2\pi i(x-1)/3}, b^{(1/9)}e^{-2\pi i(x-1)/3}\right)\\
			& - \frac{2}{1+\sqrt{3}}f\left(a^{(1/9)}e^{2\pi ix}, b^{(1/9)}e^{-2\pi ix}\right).
		\end{split}
	\end{equation}
\end{proof}

The Jacobi theta function identities can be derived from these identities of Ramanujan theta function.
\begin{lemma}
	For any function $f$  from the upper half plane to itself following identities hold.
	\begin{enumerate}
		\item  
		$2\theta \left( \frac{f(\tau)-\tau}{4}+x,\tau \right)-2\theta \left( \frac{f(\tau)-\tau}{8}+\frac{x+1}{2}, \frac{\tau}{4} \right)\\
		=e^{\pi i(\frac{f(\tau)}{4}+x)}\theta \left(\frac{f(\tau)+\tau}{4}+x, \tau \right)+ e^{\pi i(\frac{2\tau-f(\tau)}{4}-x)}\theta \left( \frac{f(\tau)-3\tau}{4}+x, \tau \right),$
		\item 
		$\frac{2\sqrt{3}}{1+\sqrt{3}}\theta \left(\frac{f(\tau)-\tau}{6}+x, \tau\right)\\
		-\sqrt{3}\left[e^{\frac{\pi i}{3}\left(\frac{f(\tau)}{3}+2x\right)}\theta \left(\frac{f(\tau)+\tau}{6}+x, \tau\right)+ e^{\frac{\pi i}{3}\left(\frac{2\tau-f(\tau)}{3}-2x\right)}\theta \left(\frac{f(\tau)-3\tau}{6}+x, \tau\right)\right]\\
		=\theta \left(\frac{f(\tau)-\tau}{18}+\frac{x+1}{3}, \frac{\tau}{9}\right)+\theta \left(\frac{f(\tau)-\tau}{18}+\frac{x-1}{3}, \frac{\tau}{9}\right)-\frac{2}{1+\sqrt{3}}\theta \left(\frac{f(\tau)-\tau}{18}+\frac{x}{3}, \frac{\tau}{9}\right).$
	\end{enumerate}
\end{lemma}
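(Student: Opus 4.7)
The plan is to convert both parts into direct applications of the Ramanujan theta identities already established in Lemma \ref{T1} and Lemma \ref{r5}, via a single change of variables that sends Ramanujan theta functions to Jacobi theta functions.

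First, I would specialize the Ramanujan arguments by setting $a = e^{\pi i (\tau + 2z)}$ and $b = e^{\pi i (\tau - 2z)}$, so that $ab = e^{2\pi i\tau}$ (with $\tau$ in the upper half plane guaranteeing $|ab| < 1$). Grouping the powers of $ab$ and $a/b = e^{4\pi i z}$ inside the defining series, a short calculation yields the master formula
$$f_{(\alpha,\beta,c)}(a,b,x) \;=\; e^{\pi i \alpha^{2} \tau \;+\; 2\pi i \alpha(z/c + x + \beta)}\;\theta\!\left(\tfrac{z}{c} + x + \beta + \alpha\tau,\ \tau\right),$$
where $\theta(w,\tau) = \sum_{n \in \mathbb{Z}} e^{\pi i n^{2} \tau + 2\pi i n w}$. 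For $\alpha = 0$ this collapses to $\theta(z/c + x + \beta, \tau)$, and the auxiliary substitution $a \mapsto a^{1/n^{2}}$, $b \mapsto b^{1/n^{2}}$ used in Lemma \ref{T1} and Lemma \ref{r5} corresponds to $(\tau, z) \mapsto (\tau/n^{2}, z/n^{2})$ in the master formula.

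Next, I would parametrize $z = (F(\tau) - \tau)/2$, writing $F$ for the function called $f$ in the statement. Every $\theta$-argument and every exponential prefactor produced by the master formula then rewrites itself in terms of $F(\tau)$ and $\tau$ alone, and since $F: \mathbb{H} \to \mathbb{H}$ is arbitrary this parametrization can realise any complex value of $z$, so no generality is lost. Substituting into part~(2) of Lemma \ref{T1}---and noting the evident coincidence $f_{(0,1/2,1)}(a^{1/4},b^{1/4},x/2) = f_{(0,-1/2,1)}(a^{1/4},b^{1/4},x/2)$ coming from $e^{\pm\pi i m} = (-1)^{m}$---converts that identity into item~(1). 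Substituting into equation (\ref{R2:5}) from the proof of Lemma \ref{r5} converts it into item~(2).

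The substantive work is purely algebraic bookkeeping of the non-trivial prefactors for $\alpha \in \{\pm 1/2, \pm 1/3\}$: expanding $e^{\pi i \alpha^{2}\tau + 2\pi i \alpha(z/c + x + \beta)}$ with $z = (F(\tau)-\tau)/2$ must reproduce exactly the exponentials $e^{\pi i(F(\tau)/4 + x)}$, $e^{\pi i((2\tau - F(\tau))/4 - x)}$, $e^{(\pi i/3)(F(\tau)/3 + 2x)}$ and $e^{(\pi i/3)((2\tau - F(\tau))/3 - 2x)}$ appearing in the statement, while the theta arguments $(F(\tau) \pm \tau)/4 + x$, $(F(\tau)-3\tau)/4 + x$, $(F(\tau) \pm \tau)/6 + x$ and $(F(\tau)-3\tau)/6 + x$ and the slots $\tau/4$ and $\tau/9$ fall out by pure arithmetic. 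No deeper ingredient (e.g.\ a modular transformation or a fresh application of the Jacobi triple product) is needed beyond what Lemma \ref{T1} and Lemma \ref{r5} already supply.
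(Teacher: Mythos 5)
Your proposal is correct and follows essentially the same route as the paper: both substitute an exponential parametrization (your $a=e^{\pi i(\tau+2z)}$, $b=e^{\pi i(\tau-2z)}$ is exactly the paper's $a=e^{\pi i f(\tau)}$, $b=e^{\pi i g(\tau)}$ with the constraint $f+g=2\tau$ imposed from the outset) to rewrite each $f_{(\alpha,\beta,c)}$ as a Jacobi theta function with an exponential prefactor, and then feed this into part (2) of Lemma \ref{T1} and into equation (\ref{R2:5}) from Lemma \ref{r5}. Your single master formula merely consolidates the paper's case-by-case computations (\ref{R2:1})--(\ref{R2:4}), and your explicit observation that $f_{(0,1/2,1)}=f_{(0,-1/2,1)}$ correctly accounts for the factor $2$ that the paper uses implicitly.
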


\begin{proof}
	\begin{enumerate}
		\item 
		Let $a=e^{\pi i f(\tau)}, b=e^{\pi ig(\tau)}$ where $f, g$ are the functions from upper half of complex plane to itself. 
		\begin{equation}\label{R2:1}
			\begin{split}
				f_{(0,0,2)}(a,b,x)=& \sum_{m \in \mathbb{Z}}e^{\pi i f(\tau)\left( \frac{m^2}{2}+\frac{m}{4} \right)}e^{\pi i g(\tau)\left( \frac{m^2}{2}-\frac{m}{4} \right)}e^{2\pi imx}\\
				=& \sum_{m \in \mathbb{Z}} e^{\pi i \left( \frac{(f+g)(\tau)}{2} \right)m^2}e^{2\pi im \left(\frac{(f-g)(\tau)}{8}+x \right)}\\
				=& \theta \left(\frac{(f-g)(\tau)}{8}+x, \frac{(f+g)(\tau)}{2} \right),
			\end{split}
		\end{equation}
		
		\begin{equation}\label{R2:2}
			\begin{split}
				f_{(\frac{1}{2},0,2)}(a,b,x)=&\sum_{m \in \mathbb{Z}}e^{\pi i f(\tau)\left( \frac{m^2}{2}+\frac{3m}{4}+\frac{1}{4} \right)}e^{\pi i g(\tau)\left( \frac{m^2}{2}+\frac{m}{4} \right)}e^{2\pi i(m+1/2)x} \\
				=& e^{\pi i(\frac{f(\tau)}{4}+x)}\sum_{m \in \mathbb{Z}} e^{\pi i \left( \frac{(f+g)(\tau)}{2} \right)m^2}e^{2\pi im \left(\frac{(3f+g)(\tau)}{8}+x \right)}\\
				=& e^{\pi i(\frac{f(\tau)}{4}+x)} \theta \left(\frac{(3f+g)(\tau)}{8}+x, \frac{(f+g)(\tau)}{2} \right),
			\end{split}
		\end{equation}
		
		\begin{equation}\label{R2:3}
			\begin{split}
				f_{(\frac{-1}{2},0,2)}(a,b,x)=& e^{\pi i(\frac{g(\tau)}{4}-x)}\sum_{m \in \mathbb{Z}} e^{\pi i \left( \frac{(f+g)(\tau)}{2} \right)m^2}e^{2\pi im \left(\frac{-(f+3g)(\tau)}{8}+x \right)}\\
				=& e^{\pi i(\frac{g(\tau)}{4}-x)} \theta \left( \frac{-(f+3g)(\tau)}{8}+x, \frac{(f+g)(\tau)}{2} \right),
			\end{split}
		\end{equation}
		
		\begin{equation} \label{R2:4}
			\begin{split}
				f_{(0,1/2,1)}(a^{1/4}, b^{1/4},x/2)= & \sum_{m \in \mathbb{Z}} e^{\pi i \left( \frac{(f+g)(\tau)}{8} \right)m^2}e^{2\pi im \left(\frac{(f-g)(\tau)}{16}+\frac{x+1}{2} \right)}\\
				=& \theta\left(\frac{(f-g)(\tau)}{16}+\frac{x+1}{2}, \frac{(f+g)(\tau)}{8} \right)
			\end{split}
		\end{equation}
		
		Using equations (\ref{R2:1}), (\ref{R2:2}), (\ref{R2:3}) and (\ref{R2:4}) in the second identity of lemma \ref{T1} we have,
		\begin{equation}
			\begin{split}
				&2\theta \left(\frac{(f-g)(\tau)}{8}+x, \frac{(f+g)(\tau)}{2} \right)-2\theta\left(\frac{(f-g)(\tau)}{16}+\frac{x+1}{2}, \frac{(f+g)(\tau)}{8} \right)\\
				=& e^{\pi i(\frac{f(\tau)}{4}+x)} \theta \left(\frac{(3f+g)(\tau)}{8}+x, \frac{(f+g)(\tau)}{2} \right)\\
				&+ e^{\pi i(\frac{g(\tau)}{4}-x)} \theta \left( \frac{-(f+3g)(\tau)}{8}+x, \frac{(f+g)(\tau)}{2} \right)
			\end{split}
		\end{equation}
		
		In particular, we will choose $f(\tau), g(\tau)$ such that $f(\tau)+g(\tau)=2\tau$. 
		
		\begin{equation}
			\begin{split}
				&2\theta \left( \frac{f(\tau)-\tau}{4}+x,\tau \right)-2\theta \left( \frac{f(\tau)-\tau}{8}+\frac{x+1}{2}, \frac{\tau}{4} \right)\\
				=&e^{\pi i(\frac{f(\tau)}{4}+x)}\theta \left(\frac{f(\tau)+\tau}{4}+x, \tau \right)+ e^{\pi i(\frac{2\tau-f(\tau)}{4}-x)}\theta \left( \frac{f(\tau)-3\tau}{4}+x, \tau \right).
			\end{split}
		\end{equation}
		\item Similarly, the identity can be obtained from lemma \ref{r5}.
	\end{enumerate}
\end{proof}

The identities discussed in lemma gives new insights to the Landen transformations. 

\section{Modular equations: Generating function for quadratic numbers}

Following lemma gives the generating function for the quadratic numbers.
\begin{lemma}\label{sn1}
	\begin{equation*}
		\sum_{m=-\infty}^{\infty} q^{k_1m^2+k_2m}
		= \frac{\left(q^{\frac{k_1+2k_2}{4}},q^{\frac{k_1-2k_2}{4}},q^{\frac{k_1}{2}}: q^{\frac{k_1}{2}}\right)_{\infty}+\left(-q^{\frac{k_1+2k_2}{4}},-q^{\frac{k_1-2k_2}{4}},q^{\frac{k_1}{2}}: q^{\frac{k_1}{2}}\right)_{\infty}}{2}
	\end{equation*}
\end{lemma}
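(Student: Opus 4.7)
The plan is to apply Jacobi's triple product identity (\ref{j1}) twice with carefully chosen arguments and then exploit a simple parity cancellation to reduce a bilateral series to its even-indexed subseries.

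Matching the first product $\bigl(q^{(k_1+2k_2)/4},q^{(k_1-2k_2)/4},q^{k_1/2}:q^{k_1/2}\bigr)_{\infty}$ against $(-a,-b,ab:ab)_{\infty}=f(a,b)$ forces the choice $a=-q^{(k_1+2k_2)/4}$, $b=-q^{(k_1-2k_2)/4}$ (note that $ab=q^{k_1/2}$), so this product equals $f(-q^{(k_1+2k_2)/4},-q^{(k_1-2k_2)/4})$. Unfolding the series definition, using $(-1)^{m(m+1)/2+m(m-1)/2}=(-1)^{m^2}=(-1)^m$, and collapsing the $q$-exponent via $\tfrac{k_1+2k_2}{4}\cdot\tfrac{m(m+1)}{2}+\tfrac{k_1-2k_2}{4}\cdot\tfrac{m(m-1)}{2}=\tfrac{k_1}{4}m^2+\tfrac{k_2}{2}m$, this product rewrites as $\sum_{m\in\mathbb{Z}}(-1)^m q^{(k_1/4)m^2+(k_2/2)m}$. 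A parallel computation with $a=q^{(k_1+2k_2)/4}$, $b=q^{(k_1-2k_2)/4}$ identifies the second product on the right with the unsigned companion $\sum_{m\in\mathbb{Z}} q^{(k_1/4)m^2+(k_2/2)m}$.

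Adding the two identities, the odd-$m$ terms cancel while the even-$m$ terms double, so the numerator of the right-hand side equals $2\sum_{m\text{ even}} q^{(k_1/4)m^2+(k_2/2)m}$. Setting $m=2\ell$ gives exponent $\tfrac{k_1}{4}(2\ell)^2+\tfrac{k_2}{2}(2\ell)=k_1\ell^2+k_2\ell$, so this equals $2\sum_{\ell\in\mathbb{Z}} q^{k_1\ell^2+k_2\ell}$; dividing by $2$ yields the lemma.

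The only real obstacle is careful bookkeeping: tracking the quartic-denominator $q$-exponents, verifying the sign identity $(-1)^{m^2}=(-1)^m$, and executing the rescaling $m\mapsto 2\ell$ so that the quadratic exponent $k_1\ell^2+k_2\ell$ is correctly recovered. There is no deep analytical subtlety provided $|q|<1$ and $k_1>0$, which ensures $|q^{k_1/2}|<1$ and guarantees absolute convergence of both the product and the bilateral series.
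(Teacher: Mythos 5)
Your proof is correct, but it takes a different route from the paper. You work entirely on the right-hand side: you invert the Jacobi triple product (\ref{j1}) to recognize the two infinite products as $f(-q^{(k_1+2k_2)/4},-q^{(k_1-2k_2)/4})$ and $f(q^{(k_1+2k_2)/4},q^{(k_1-2k_2)/4})$, expand both as bilateral series, and finish with the parity cancellation $1+(-1)^m$ followed by the substitution $m=2\ell$. The paper instead obtains the lemma as a specialization of the first identity of Corollary \ref{r4} (itself derived from the eigenvector structure of the $2\times 2$ DFT in Lemma \ref{T1}), evaluating it at $x=-\tau$, $a=q^{k_1+2k_2+4}$, $b=q^{k_1-2k_2-4}$ so that the left side becomes $4\sum_m q^{k_1m^2+k_2m}$ and the right side becomes $2[f(q^{(k_1+2k_2)/4},q^{(k_1-2k_2)/4})+f(-q^{(k_1+2k_2)/4},-q^{(k_1-2k_2)/4})]$, and only then applies the triple product. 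The two arguments are computationally equivalent -- the first identity of Corollary \ref{r4} is exactly your even/odd decomposition in disguise, since $f_{(0,\pm 1/2,1)}$ carries the factor $(-1)^m$ relative to $f_{(0,0,1)}$ -- but yours is self-contained and more elementary, needing nothing beyond the triple product, while the paper's version exhibits the identity as a consequence of the DFT machinery that is the point of the article. Two minor remarks in your favor: you correctly use the form $f(a,b)=(-a,-b,ab:ab)_\infty$ (the second displayed expression in (\ref{j1}) has a sign typo, writing $-ab$ where $ab$ is meant), and you are explicit about the convergence condition $|q^{k_1/2}|<1$, which the paper leaves implicit.
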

\begin{proof}
	Evaluate first identity of the corollary \ref{r4} at $x=-\tau, a= q^{k_1+2k_2+4}, b= q^{k_1-2k_2-4}.$ This gives,
	\begin{equation*}
		4 f_{(0,0,2)}(a,b,x)= 4f_{(0,0,2)}(q^{k_1+2k_2+4},q^{k_1-2k_2-4},-\tau)
		= 4\sum_{m=-\infty}^{\infty} q^{k_1m^2+k_2m}.	
	\end{equation*}
	
	and
	\begin{equation*}
		\begin{split}
			&2 f_{(0,0,1)} (a^{1/4}, b^{1/4},x/2)
			+[f_{(0,\frac{1}{2},1)}(a^{1/4}, b^{1/4},x/2)+ f_{(0,\frac{-1}{2},1)}(a^{1/4}, b^{1/4},x/2)]\\
			=&2\sum_{m=-\infty}^{\infty}a^{\frac{m(m+1)}{8}}b^{\frac{m(m-1)}{8}}e^{\pi imx}+2\sum_{m=-\infty}^{\infty}a^{\frac{m(m+1)}{8}}b^{\frac{m(m-1)}{8}}e^{\pi imx}(-1)^m,\\
			=& 2 \left[f \left(q^{\frac{k_1+2k_2}{4}}, q^{\frac{k_1-2k_2}{4}} \right) + f \left(-q^{\frac{k_1+2k_2}{4}}, -q^{\frac{k_1-2k_2}{4}} \right) \right].
		\end{split}
	\end{equation*}
	Using equation (\ref{j1}) we obtain the quadratic numbers identity:
	\begin{equation}
		\sum_{m=-\infty}^{\infty} q^{k_1m^2+k_2m}
		= \frac{\left(q^{\frac{k_1+2k_2}{4}},q^{\frac{k_1-2k_2}{4}},q^{\frac{k_1}{2}}: q^{\frac{k_1}{2}}\right)_{\infty}+\left(-q^{\frac{k_1+2k_2}{4}},-q^{\frac{k_1-2k_2}{4}},q^{\frac{k_1}{2}}: q^{\frac{k_1}{2}}\right)_{\infty}}{2}
	\end{equation}
\end{proof} 
\begin{remark}
	Different choices of $x$ and corresponding choices of $a$ and $b$ give the alternate expression for the generating functions for quadratic numbers.
\end{remark}	
The generating functions for the polygonal numbers are given in the following corollary. 
\begin{corollary}\label{R5}
	\begin{enumerate}
		\item The generating function for triangular numbers 
		\small \begin{equation*}
			\begin{split}
				&2\sum_{m=-\infty}^{\infty}q^{\frac{m(m+1)}{2}}\\
				=&\prod_{k=0}^{\infty}(1-q^{\frac{k+1}{4}})\left[\prod_{k=0}^{\infty}(1-q^{\frac{2k+3}{8}}) (1-q^{\frac{2k-1}{8}})+\prod_{k=0}^{\infty}(1+q^{\frac{2k+3}{8}}) (1+q^{\frac{2k-1}{8}}))
				\right]
			\end{split}
		\end{equation*}
		\item The generating function for square numbers
		\begin{equation*}
			2+4\sum_{m=1}^{\infty}q^{m^2}=\prod_{k=0}^{\infty}(1-q^{\frac{k+1}{2}})\left[\prod_{k=0}^{\infty}(1-q^{\frac{2k+1}{4}})^2 +\prod_{k=0}^{\infty}(1+q^{\frac{2k+1}{4}})^2
			\right]
		\end{equation*}
		
		\item The generating function for Pentagonal numbers
		\begin{equation*}
			\begin{split}
				&2\sum_{m=-\infty}^{\infty}q^{\frac{m(3m-1)}{2}}\\
				=&\prod_{k=0}^{\infty}(1-q^{\frac{3(k+1)}{4}})\left[\prod_{k=0}^{\infty}(1-q^{\frac{6k+1}{8}}) (1-q^{\frac{6k+5}{8}})+\prod_{k=0}^{\infty}(1+q^{\frac{6k+1}{8}}) (1+q^{\frac{6k+5}{8}})
				\right]
			\end{split}
		\end{equation*}
		
		\item The generating function for Hexagonal numbers
		\begin{equation*}
			2\sum_{m=-\infty}^{\infty}q^{m(2m-1)}=\prod_{k=0}^{\infty}(1-q^{2k+2})(1+q^{k})
		\end{equation*}
		
		\item The generating function for Heptagonal numbers
		\begin{equation*}
			\begin{split}
				&2\sum_{m=-\infty}^{\infty}q^{\frac{m(5m-3)}{2}}\\
				=&\prod_{k=0}^{\infty}(1-q^{\frac{5(k+1)}{4}})\left[\prod_{k=0}^{\infty}(1-q^{\frac{10k-1}{8}}) (1-q^{\frac{10k+11}{8}})+\prod_{k=0}^{\infty}(1+q^{\frac{10k-1}{8}}) (1+q^{\frac{10k+11}{8}})
				\right]
			\end{split}
		\end{equation*}
		
		\item The generating function for Octagonal numbers 
		\begin{equation*}
			\begin{split}
				&2\sum_{m=-\infty}^{\infty}q^{3m^2-2m}\\
				=&\prod_{k=0}^{\infty}(1-q^{\frac{3(k+1)}{2}})\left[\prod_{k=0}^{\infty}(1-q^{\frac{6k-1}{4}}) (1-q^{\frac{6k+7}{4}})+\prod_{k=0}^{\infty}(1+q^{\frac{6k-1}{4}}) (1+q^{\frac{6k+7}{4}})
				\right]
			\end{split}
		\end{equation*}
		\item The $m^{th}$ $r$-gonal number is defined by $\frac{(r-2)m^2+(4-r)m}{2}$. The generating function is
		
		\begin{equation*}
			\begin{split}
			&2\sum_{m=-\infty}^{\infty} q^{\frac{(r-2)m^2+(4-r)m}{2}}\\
			=& \left(q^{\frac{r-2}{4}},q^{\frac{6-r}{8}},q^{\frac{3r-10}{8}}: q^{\frac{r-2}{4}}\right)_{\infty}+\left(q^{\frac{r-2}{4}},-q^{\frac{6-r}{8}},-q^{\frac{3r-10}{8}}: q^{\frac{r-2}{4}}\right)_{\infty}
		\end{split}
		\end{equation*}
	\end{enumerate}
\end{corollary}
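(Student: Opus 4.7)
The plan is to obtain this as a direct specialization of Lemma~\ref{sn1}, with no new analytic input required. Since the $r$-gonal exponent is $\frac{(r-2)m^2+(4-r)m}{2}$, I would match this against the form $k_1 m^2 + k_2 m$ appearing in Lemma~\ref{sn1} by setting
\[
k_1 = \frac{r-2}{2}, \qquad k_2 = \frac{4-r}{2}.
\]
With these choices the left-hand side of Lemma~\ref{sn1} is precisely $\sum_{m\in\mathbb{Z}} q^{\frac{(r-2)m^2+(4-r)m}{2}}$, which after multiplying by $2$ is the left-hand side of the claim.

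Next I would compute the three exponents that appear on the right-hand side of Lemma~\ref{sn1}. A short arithmetic check gives
\[
\frac{k_1}{2}=\frac{r-2}{4},\qquad \frac{k_1+2k_2}{4}=\frac{6-r}{8},\qquad \frac{k_1-2k_2}{4}=\frac{3r-10}{8}.
\]
Substituting these into the right-hand side of Lemma~\ref{sn1} and multiplying through by $2$ yields exactly the sum of the two infinite products in the statement. This finishes the proof.

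The only point that warrants a brief mention is validity: one needs $k_1>0$ (so that the series converges and the infinite products make sense as formal/analytic objects in $|q|<1$), which corresponds to $r>2$, an assumption implicit in the definition of an $r$-gonal number. There is no genuine obstacle here; the ``hard part,'' insofar as one exists, is simply bookkeeping of the three exponents $\tfrac{r-2}{4}$, $\tfrac{6-r}{8}$, $\tfrac{3r-10}{8}$ and checking that they reduce, for $r=3,4,5,6,7,8$, to the exponents appearing in the earlier items of the corollary, which provides a useful consistency check before declaring the derivation complete.
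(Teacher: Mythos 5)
Your proposal is correct and matches the paper's intent exactly: the corollary is stated as an immediate specialization of Lemma~\ref{sn1}, and your choices $k_1=\frac{r-2}{2}$, $k_2=\frac{4-r}{2}$ produce precisely the exponents $\frac{r-2}{4}$, $\frac{6-r}{8}$, $\frac{3r-10}{8}$ appearing in item~7, with items~1--6 obtained by setting $r=3,\dots,8$. The paper offers no separate proof of the corollary, so there is nothing further to compare.
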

\begin{remark}
	\begin{enumerate}
		\item A fifth power of the third identity of the corollary \ref{R5} is
		\begin{equation*}
			\left(\sum_{m=-\infty}^{\infty}q^{\frac{m(3m-1)}{2}}\right)^5=\sum_{n=0}^{\infty} p(n) q^n
		\end{equation*}
		where $p(n)$ is the number of ways by which a non negative integer can be expressed as a sum of five pentagonal numbers, and 
		
		\begin{equation*}
			\sum_{n=0}^{\infty} p(n) q^n= 1+ 5q+15q^2+30q^3+45q^4+56q^5+65q^6+85q^7+115q^8+150q^9+\cdots .
		\end{equation*}
		
		\item Lemma \ref{sn1} also gives a generating function for the numbers of the form $k_1m^2+k_2m+k_3$.
	\end{enumerate}
\end{remark}

\section{Conclusion} A new technique to obtain the functional relations between Ramanujan theta functions has been developed. This idea can be extended to higher dimensional DFT's to obtain some significant identities between Jacobi theta functions.

	\bibliographystyle{sn-jnl}

	
	
\end{document}